\documentclass[onefignum,onetabnum]{siamonline190516}

\ifpdf
\hypersetup{
	pdftitle={Stability Domains for Quadratic Reduced-Order Models},
	pdfauthor={B. Kramer}
}
\fi

\usepackage{amsfonts}
\usepackage{graphicx}
\usepackage{color}
\usepackage{enumitem} 
\usepackage{subfigure}
\usepackage{appendix}  
\usepackage{url}
\usepackage{algorithmic}

\newsiamremark{remark}{Remark}


\newcommand{\x}{\mathbf{x}}
\newcommand{\y}{\mathbf{y}}
\newcommand{\z}{\mathbf{z}}
\newcommand{\bmu}{\boldsymbol{\mu}}
\renewcommand{\u}{\mathbf{u}}

\newcommand{\A}{\mathbf{A}}
\newcommand{\B}{\mathbf{B}}
\newcommand{\C}{\mathbf{C}}
\newcommand{\E}{\mathbf{E}}
\newcommand{\F}{\mathbf{F}}
\newcommand{\G}{\mathbf{G}}
\renewcommand{\H}{\mathbf{H}}
\newcommand{\I}{\mathbf{I}}
\newcommand{\J}{\mathbf{J}}
\newcommand{\K}{\mathbf{K}}
\renewcommand{\L}{\mathbf{L}}
\newcommand{\M}{\mathbf{M}}
\newcommand{\N}{\mathbf{N}}
\renewcommand{\P}{\mathbf{P}}
\newcommand{\Q}{\mathbf{Q}}
\newcommand{\R}{\mathbf{R}}
\renewcommand{\S}{\mathbf{S}}
\newcommand{\T}{\mathbf{T}}
\newcommand{\U}{\mathbf{U}}
\newcommand{\V}{\mathbf{V}}
\newcommand{\W}{\mathbf{W}}
\newcommand{\X}{\mathbf{X}}
\newcommand{\bzero}{\mathbf{0}}
\newcommand{\Real}{\mathbb{R}}
\newcommand{\bSigma}{\boldsymbol{\Sigma}}

\headers{Stability Domains for Quadratic-Bilinear Reduced-Order Models}{B. Kramer}

\title{Stability Domains for Quadratic-Bilinear  Reduced-Order Models\thanks{Submitted to the editors \today.
	}}
	
\author{Boris Kramer\thanks{Department of Mechanical and Aerospace Engineering, University of California San Diego, CA 
		(\email{bmkramer@ucsd.edu}, \url{kramer.ucsd.edu}).   }     }

\begin{document}
\maketitle

\begin{abstract}
We propose a computational approach to estimate the stability domain of quadratic-bilinear reduced-order models (ROMs), which are low-dimensional approximations of large-scale dynamical systems. 
For nonlinear ROMs, it is not only important to show that the origin is locally asymptotically stable, but also to quantify if the operative range of the ROM is included in the region of convergence. While accuracy and structure preservation remain the main focus of development for nonlinear ROMs, computational methods that go beyond the existing highly conservative analytical results have been lacking thus far.
In this work, for a given quadratic Lyapunov function, we first derive an analytical estimate of the stability domain, which is rather conservative but can be evaluated efficiently. With the goal to enlarge this estimate, we provide an optimal ellipsoidal estimate of the stability domain by solving a convex optimization problem. This provides us with valuable information about stability properties of the ROM, an important aspect of predictive simulation. 
We do not assume a specific ROM method, so a particular appeal is that the approach is applicable to quadratic-bilinear models obtained via data-driven approaches, where ROM stability properties cannot---per definition---be derived from the full-order model.  
Numerical results for a LQG-balanced ROM of Burgers' equation, a  proper orthogonal decomposition ROM of FitzHugh-Nagumo, and a non-intrusive ROM of Burgers' equation demonstrate the scalability and quantitative advantages of the proposed approach. The optimization-based estimates of the stability domain are found to be up to four orders of magnitude less conservative than analytical estimates.
\end{abstract}	
	
\begin{keywords}
	Stability domain; domain of attraction; reduced-order models; quadratic-bilinear dynamical systems
\end{keywords}
%
\begin{AMS}
	34D20, 
	34D35, 
	37E99, 
	37M22. 
\end{AMS}

\section{Introduction}
Reduced-order modeling provides a mathematical framework for efficient simulation of complex systems, where large-scale nonlinear dynamical systems are approximated on low-dimensional manifolds. High-dimensional systems, arise, e.g., from semi-discretization of partial differential equation (PDE), and their dimension (degrees of freedom) can be in the order of thousands and hundreds of thousands. Reduced-order models (ROMs) of those systems are important in the context of prediction, control, design, and optimization, see, e.g., \cite{antoulas05,BCOW2017morBook,quarteroni2014reduced}. 
%
While the development and analysis of ROMs for linear systems has matured in recent years, ROMs for nonlinear systems---as expected---face significantly different challenges. For general nonlinear systems, proper orthogonal decomposition (POD)~\cite{holmes_lumley_berkooz_1996} and the reduced basis method~\cite{hesthaven2016certified} are the most commonly used model reduction methods. 
%
If we restrict ourselves to the class of quadratic-bilinear (QB) systems, ROM methods have been developed in the intrusive setting (where governing equations are available)~\cite{bennerBreiten2015twoSided,bennergoyal2016QBIRKA,cao2018krylovQB} and in the non-intrusive setting~\cite{gosea2018LoewnerQB,SKHW2020_learning_ROMs_combustor,QKPW2020_lift_and_learn,BGKPW2020_OpInf_nonpoly}, where the model has to be learned from data. 
Notably, QB models are a less restrictive class than the name suggests: many nonlinear dynamical systems can be transformed into QB form via variable transformations and the introduction of auxiliary variables, see~\cite{mccormick1976computability, kerner1981universal,savageau1987recasting,gu2011qlmor,bennerBreiten2015twoSided,KW18nonlinearMORliftingPOD,SKHW2020_learning_ROMs_combustor,guillot2019taylor,QKPW2020_lift_and_learn}. Consequently, reduced-order modeling and analysis developed for QB systems apply to a large class of nonlinear systems.

The characterization of stability domains is important for open and closed loop simulation. In the nonlinear control and dynamical systems community, several approaches exist for estimating the stability domain of (very low-dimensional) polynomial systems, and in particular, quadratic systems. 
The authors in \cite{papachristodoulou2005analysis} use polynomial recasting of nonlinear dynamical systems together with sum-of-squares decomposition to prove stability of nonlinear dynamical systems.  The result is illustrated on numerical examples with at most two DoFs. 
In \cite{najafi2016fast}, a fast sampling approach to estimate the domain of attraction is used. While this direct approach works well in lower dimensions (examples up to third order), scalability issues for higher dimensions remain. 
In \cite{zarei2018arc} it is shown that the arc length function is a maximal Lyapunov function (particularly, it is a Lyapunov function inside the domain of attraction, and approaches infinity on the boundary). A rather expensive computational procedure for approximating the arc length function is proposed, and illustrated on numerical examples  with at most three DoFs. 
The authors in \cite{chesi2007estimatingDA-union-Lyapunov-estimates} parametrize the Lyapunov function used in the stability proof, and construct the domain of attraction as a union of (potentially infinitely many) Lyapunov functions. While results in two and three DoF systems are shown, the authors point out that scalability will be a major issue. 
For quadratic systems, \cite{genesio1990stability-quadratic-systems} present a conservative ellipsoidal estimate of the domain of attraction. The method enlarges the ellipsoidal shape of the Lyapunov function by evaluating the $2^{n-1}$ corners of a polytope in $\Real^n$. 
In \cite{levin1994analytical-method-DA}, an analytical method for estimating the domain of attraction of polynomial systems is proposed via parametrized quadratic Lyapunov functions. The method
, however, provides only conservative analytical estimates of the stability domain. Examples of at most two DoFs are presented. 
The authors in \cite{liu1993global} present a conservative stability analysis for linear systems with quadratic state feedback controllers, where switched feedback is shown to improve the transient system response. The considered systems are only of two DoFs. 
The domain of attraction can virtually have any shape, and while ellipsoidal, circular, polytope solutions are common, they are also conservative. The authors in \cite{pitarch2013closed} propose a method that enlarges an initial guess of the stability domain by using fuzzy polynomials together with sum-of-squares techniques. However, only examples with two DoFs are presented. 
The emergence of sum-of-squares techniques for convex programming has also led to improved algorithms for stability analysis for polynomial systems, mostly with quadratic Lyapunov functions, see \cite{chesi2011domain,anderson2015advances} for more details.
As evidenced by the above literature, stability domain computations have largely focused on very low-dimensional ($\leq4$ DoFs) polynomial systems, and scalability remains a concern.

This work enables quantitative analysis of stability domains for projection-based and fully data-driven ROMs via a computationally tractable optimization algorithm. The analysis of accurate ROMs of high-dimensional systems is a first step towards stability analysis for complex high-dimensional systems.
Our focus is solely on scalable approaches that work well in those (higher, yet not large-scale) ROM dimensions of $\mathcal{O}(10)$.  In particular, we propose to use a convex optimization-based approach from~\cite{tesi1996stability-quadratic-Lyapunov-function} to compute estimates of the stability domain for quadratic-bilinear systems. 
Our contributions are thus threefold: First, we derive a new analytical estimate of the stability domain, which is rather conservative but can be evaluated efficiently, even in the case of high-dimensional systems.
Second, with the goal to enlarge this estimate, we show that our efficient implementation of the  convex optimization-based approach from~\cite{tesi1996stability-quadratic-Lyapunov-function} scales well for $\mathcal{O}(10)$-dimensional ROMs, which goes well beyond the four dimensional systems analyzed in the literature. This allows us to certify ROM simulations with respect to stability domains centered around equilibria. 
Third, we demonstrate the methods' flexibility on several different (projection-based and fully data-driven) ROMs with up to 21 degrees of freedom. This provides us with new insight into the stability characteristics of those ROM techniques. We do not assume a specific ROM method, so a particular appeal is that the approach is applicable to quadratic-bilinear models obtained via data-driven approaches, where ROM stability properties cannot---per definition---be derived from the full-order model.

This paper is organized as follows. Section~\ref{sec:background} presents stability definitions and some necessary background material. Section~\ref{sec:analytical_est} derives an analytical estimate for the stability domain, and Section~\ref{sec:ellipsoidalEst} presents an optimization-based approach to obtain larger, less-conservative estimates. Section~\ref{sec:numerics} illustrates our findings on three test problems with different ROMs for semi-discretized PDE systems. Section~\ref{sec:conclusions} offers conclusions and an outlook to future work.

\section{Background: Stability analysis for quadratic-bilinear systems} \label{sec:background}
We present necessary background material and define the problem under consideration in this paper. Section~\ref{sec:QBsys} introduces the specific model form of quadratic-bilinear (QB) systems, Section~\ref{sec:Equilibrium} discusses equilibrium solutions, and Section~\ref{sec:DA} defines the domain of attraction (also called stability domain, region of attraction, or basin of attraction).

\subsection{Quadratic-bilinear systems}\label{sec:QBsys}
Consider a quadratic-bilinear (QB) system of the form
\begin{align}
\E \dot{\x}  = \A \x + \H (\x \otimes \x) + \sum_{i=1}^n \N_i \x \u_i + \B \u  \label{eq:QBsys}
\end{align}
with state $\x = \x(t) \in \Real^n$, initial condition $\x(0) = \bzero$, input $\u = \u(t) \in \Real^m$, matrices $\E \in \Real^{n\times n}, \ \A \in \Real^{n\times n}, \ \H \in \Real^{n\times n^2}, \ \B\in \Real^{n\times m}$, and  $\N_i \in \Real^{n\times n}$ for $i=1, 2, \ldots, m$.  Here, the matrix $\E$ is assumed non-singular, and the symbol $\otimes$ denotes the standard Kronecker product.  
Without loss of generality, we assume the matrix $\H$ is symmetric in that $\H(\x_1\otimes \x_2) = \H(\x_2\otimes \x_1)$ for any $\x_1, \x_2 \in \Real^n$, which can be enforced without changing the dynamics, see, e.g.,~\cite{bennerBreiten2015twoSided}.	

To ease notation, the material in the following two sections on stability domains is presented for a generic $n$-dimensional system. In Section~\ref{sec:numerics} we introduce the reduced-order modeling context formally, which results in low-order models for which these methods can be applied.

\subsection{Stability of equilibrium solutions and domain of attraction} \label{sec:Equilibrium}
To investigate the stability of equilibrium solutions for QB systems, consider the system
\begin{align}
\E \dot{\x} = \A \x + \H (\x \otimes \x), \label{eq:QB-autonomous}
\end{align}
and invertible matrix $\E$. The above systems includes the autonomous case, i.e, equation~\eqref{eq:QBsys} with $\u\equiv \bzero$ and also the case with state-dependent feedback $\u=\mathcal{K}\x$ (with proper redefinition of the matrices $\A,\H$). It is clear that $\x_e=\bzero$ is an equilibrium. 
Moreover, it is sufficient to study the zero equilibrium of the QB system. To see this, note that all equilibrium solutions are given by $\x_e\neq \bzero$ that satisfy
\begin{align}
\bzero = \A \x_e + \H (\x_e \otimes \x_e).
\end{align}
Introducing $\z = \x - \x_e$ and inserting into equation~\eqref{eq:QB-autonomous} gives
\begin{align}
\E \dot{\z} & = \A \z  + \H (\z \otimes \z) + 2\H (\x_e \otimes \z ) + \underbrace{\A \x_e + \H (\x_e \otimes \x_e)}_{=\bzero} \\
& = [\A + 2\H (\I \otimes \x_e) ]\z + \H (\z \otimes \z).
\end{align}
The resulting system is again quadratic and has zero as an equilibrium. Therefore, without loss of generality we study the zero equilibrium of the QB system, and note that the theory carries over to nonzero equilibria after the model is shifted as shown above.

\subsection{Domain of attraction} \label{sec:DA}
For nonlinear systems, stability of equilibrium point requires a local characterization--whereas for linear systems, stability of the system matrix automatically guarantees global stability. The domain of attraction (DA), also called stability domain, is the set of all initial conditions that result in bounded trajectories that converge to the equilibrium solution. Let $\phi(\x_0,t)$ be a solution to~\eqref{eq:QB-autonomous} for a given initial condition $\x_0$. The domain of attraction for the equilibrium $\x_e$ is defined as
\begin{eqnarray}
\mathcal{A}(\x_e) := \left \{ \x_0: \lim_{t\rightarrow \infty} \phi(\x_0,t)=\x_e \right \}.
\end{eqnarray} 
The domain of attraction can have complicated geometric structure. Computing the domain of attraction analytically is impossible even in very simple cases. A common solution to this problem is to lower-bound the DA by a set $\mathcal{D} \subseteq \mathcal{A}$ of simple geometric shapes, such as ellipsoids or polyhedrons. The task then becomes to make this lower bound as tight as possible. 

We next state the well-known theorem that relates local stability to a suitable Lyapunov function, and characterizes the domain of attraction. 
\begin{theorem} \cite{tesi1996stability-quadratic-Lyapunov-function,chesi2007estimatingDA-union-Lyapunov-estimates}
	If there exists a Lyapunov function $v(\cdot): \Real^n \mapsto \Real^+$ such that 
	$$
	v(\x) > 0, \qquad \dot{v}(\x) <0
	$$	
	for all $\x$ in a neighborhood of zero, then the zero solution is locally asymptotically stable. Moreover, 
	\begin{equation}
	\mathcal{D}(\rho) = \{ \x: v(\x) \leq \rho^2, \ \dot{v}(\x) <0  \}. \label{eq:DA}
	\end{equation}
	is an {\rm{estimate of the domain of attraction}} for the zero equilibrium. 
\end{theorem}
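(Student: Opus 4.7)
The plan is to apply the standard Lyapunov direct method together with a sublevel-set invariance argument to pin down the stated estimate. I would split the statement into two parts: (i) local asymptotic stability of $\x_e=\bzero$, and (ii) the inclusion $\mathcal{D}(\rho)\subseteq \mathcal{A}(\bzero)$.

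For part (i), first I would fix a ball $B_r(\bzero)$ contained in the hypothesized neighborhood where $v(\x)>0$ and $\dot v(\x)<0$ for $\x\neq\bzero$, and choose a sublevel threshold $c>0$ small enough that the connected component of $\{\x:v(\x)\leq c\}$ containing the origin lies strictly inside $B_r(\bzero)$. Along any trajectory $\phi(\x_0,t)$ starting in that component, $v$ is strictly decreasing and bounded below by zero, so the component is positively invariant, which already yields stability in the sense of Lyapunov. A standard contradiction argument --- if $v(\phi(\x_0,t))$ converged to some $L>0$, then the trajectory would be trapped in the compact annulus $\{L\leq v\leq v(\x_0)\}$ on which $\dot v\leq -\delta<0$ for some $\delta$, forcing $v$ below $L$ in finite time --- shows $v(\phi(\x_0,t))\to 0$, hence $\phi(\x_0,t)\to\bzero$ by positive-definiteness of $v$.

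For part (ii), I would take an arbitrary $\x_0\in\mathcal{D}(\rho)$ and consider the trajectory $\x(t)=\phi(\x_0,t)$. Since $\dot v(\x_0)<0$ and $v(\x_0)\leq\rho^2$, for small $t>0$ we have $v(\x(t))<\rho^2$; as long as the trajectory stays in $\mathcal{D}(\rho)$, $v$ continues to strictly decrease. To close the argument, I would verify that $\x(t)$ cannot cross the boundary of the connected component of $\{v\leq\rho^2\}$ that contains $\bzero$, because doing so would require $v(\x(t))$ to first climb back to $\rho^2$, contradicting strict monotonic decrease. Positive invariance then allows me to repeat the LaSalle-style argument from part~(i) and conclude $\x(t)\to\bzero$.

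The main obstacle is the topological subtlety in part~(ii): the definition of $\mathcal{D}(\rho)$ in~\eqref{eq:DA} combines a sublevel set with the open set $\{\dot v<0\}$, and one must ensure that it is the connected component containing the origin that is positively invariant, and that this component does not ``leak'' into a region where $\dot v\geq 0$. For the quadratic Lyapunov functions $v(\x)=\x^\top\P\x$ with $\P\succ 0$ used in the sequel, sublevel sets are ellipsoids and this subtlety collapses to the single verifiable condition $\dot v(\x)<0$ for all $\x\neq\bzero$ with $\x^\top\P\x\leq\rho^2$ --- which is precisely the inequality the analytical and optimization-based bounds of later sections will enforce.
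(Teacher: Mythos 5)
The paper does not actually prove this theorem; it is quoted from the cited references \cite{tesi1996stability-quadratic-Lyapunov-function,chesi2007estimatingDA-union-Lyapunov-estimates} and used as a black box, so there is no in-paper proof to compare against. Your argument is the standard and correct one: Lyapunov's direct method plus a sublevel-set invariance and LaSalle-type compactness argument for part (i), and positive invariance of the origin's component of $\{v\leq\rho^2\}$ for part (ii). Importantly, you also caught the one real delicacy in the statement as printed: read literally, $\mathcal{D}(\rho)=\{\x: v(\x)\leq\rho^2,\ \dot v(\x)<0\}$ is not positively invariant (a trajectory could reach a point with $v<\rho^2$ but $\dot v\geq 0$, after which $v$ may climb back up), and likewise $v(\x)>0$, $\dot v(\x)<0$ cannot hold \emph{at} $\x=\bzero$ since $\bzero$ is an equilibrium. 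The intended hypothesis --- and the one your proof actually uses --- is that $\dot v(\x)<0$ for all $\x\neq\bzero$ in the sublevel set $\{v\leq\rho^2\}$, which for the quadratic $v(\x)=\x^\top\E^\top\P\E\x$ of Sections~\ref{sec:analytical_est}--\ref{sec:ellipsoidalEst} is exactly what Proposition~\ref{prop:analyticalEstimate} and the optimization~\eqref{eq:OptProblem} (infimum of $v$ over $\{\dot v=0\}$) certify. With that reading your proof is complete; the only cosmetic caveat is that in part (ii) the sentence ``contradicting strict monotonic decrease'' should be understood as resting on this strengthened hypothesis rather than on membership in $\mathcal{D}(\rho)$ alone, which you acknowledge in your closing paragraph.
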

This theorem presents the basis for our computational algorithms following in the next section. Finding the largest $\rho$ that satisfies \eqref{eq:DA} can then be shown to require the solution of a convex optimization problem.

\section{Analytical estimates of the stability domain} \label{sec:analytical_est}
Lyapunov functions are the standard tool for stability analysis of nonlinear systems, and hence for approximating the DA. The choice of the Lyapunov function is non-trivial and influences the DA approximation as the level sets of the Lyapunov function are needed in DA estimates.
In stability analysis for quadratic systems, Lyapunov functions such as polyhedral functions~\cite{amato2008stability-polyhedral-Lyapunov}, polynomials of higher degree~\cite{chesi2007estimatingDA-union-Lyapunov-estimates}, and arc length function approximations~\cite{zarei2018arc} have been used. 
Here, we present an analytical  approach to approximate the domain of attraction. Quadratic Lyapunov functions lead to ellipsoidal estimates of the DA~\cite{genesio1990stability-quadratic-systems,levin1994analytical-method-DA} which are easy to compute, and provide an initial conservative estimate of the DA.

A Lyapunov matrix is a positive definite matrix $\P$ that satisfies 
\begin{equation}
 \A^\top \P \E + \E^\top \P \A + \Q = \bzero \label{eq:LyapMatrix}
\end{equation}
for some positive definite matrix $\Q = \Q_f^\top \Q_f$. Given a Lyapunov matrix, we define a quadratic, nonnegative Lyapunov function via
\begin{equation} \label{eq:lyapunovFunction}
v(\x) = \x^\top \E^\top \P \E \x, 
\end{equation}
with a derivative along trajectories as
\begin{align}
\dot{v}(\x) = \dot{\x}^\top \E^\top \P \x  + \x^\top \P \E \dot{\x}.
\end{align}

\begin{proposition} \label{prop:analyticalEstimate}
Let $\A$ be Hurwitz and $\P$ be a Lyapunov matrix, i.e., a solution to equation~\eqref{eq:LyapMatrix}. Let $v(\x) =  \x^\top \E^\top \P \E\x$ be a Lyapunov function. Then $\x_e=\bzero$ is a locally stable equilibrium and $\mathcal{D}(\rho)\subseteq \mathcal{A}(\bzero)$ with $\rho = \frac{\sigma^2_{\text{min}} (\Q_f)} {2  \Vert \H \Vert_2 \sqrt{\Vert \P \Vert_2}} $ is an estimate of the domain of attraction.
\end{proposition}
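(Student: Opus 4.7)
The plan is to apply the theorem just stated by taking $v(\x) = \x^\top \E^\top \P \E \x$ as the candidate Lyapunov function and bounding $\dot v(\x)$ to identify an explicit sublevel set on which $\dot v < 0$. Positivity of $v$ is immediate since $\P \succ 0$ and $\E$ is nonsingular, so the work lies entirely in controlling $\dot v$.

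First I would differentiate $v$ along trajectories of \eqref{eq:QB-autonomous}. Using $\E\dot\x = \A\x + \H(\x\otimes\x)$ and the symmetry of $\P$, the expression $\dot v = (\E\dot\x)^\top \P \E\x + \x^\top \E^\top \P (\E\dot\x)$ splits into a purely quadratic piece and a cubic piece:
\begin{equation*}
\dot v(\x) \;=\; \x^\top\bigl(\A^\top \P \E + \E^\top \P \A\bigr)\x + 2\x^\top \E^\top \P\,\H(\x\otimes\x).
\end{equation*}
The Lyapunov equation \eqref{eq:LyapMatrix} collapses the quadratic piece to $-\x^\top \Q \x$, and the factorization $\Q = \Q_f^\top \Q_f$ yields the lower bound $\x^\top \Q \x = \|\Q_f\x\|_2^2 \geq \sigma_{\min}^2(\Q_f)\,\|\x\|_2^2$.

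The crux of the argument is bounding the cubic term so that one factor reproduces $\sqrt{v(\x)}$ and the final constant contains no $\|\E\|$. The trick I would use is to split $\P = \P^{1/2}\P^{1/2}$ and apply Cauchy–Schwarz,
\begin{equation*}
\bigl|2\x^\top \E^\top \P\,\H(\x\otimes\x)\bigr|
\;\leq\; 2\,\|\P^{1/2}\E\x\|_2 \cdot \|\P^{1/2}\H(\x\otimes\x)\|_2
\;\leq\; 2\sqrt{v(\x)}\,\sqrt{\|\P\|_2}\,\|\H\|_2\,\|\x\|_2^2,
\end{equation*}
where I used $\|\x\otimes\x\|_2 = \|\x\|_2^2$ and $\|\P^{1/2}\|_2^2 = \|\P\|_2$. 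Combining this with the lower bound on $\x^\top \Q \x$ gives
\begin{equation*}
\dot v(\x) \;\leq\; -\x^\top \Q \x \left(1 - \frac{2\sqrt{\|\P\|_2}\,\|\H\|_2}{\sigma_{\min}^2(\Q_f)}\,\sqrt{v(\x)}\right).
\end{equation*}

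The parenthesized factor is strictly positive exactly when $\sqrt{v(\x)} < \sigma_{\min}^2(\Q_f)/\bigl(2\,\|\H\|_2\sqrt{\|\P\|_2}\bigr) = \rho$, so on $\{\x\neq\bzero : v(\x) < \rho^2\}$ we have $\dot v(\x)<0$. Local asymptotic stability of the origin and the inclusion $\mathcal{D}(\rho)\subseteq \mathcal{A}(\bzero)$ then follow directly from the theorem preceding this proposition. The main obstacle is precisely the splitting of $\P$ in the Cauchy–Schwarz step: a naive bound of the cubic term by $\|\E\|_2\|\P\|_2\|\H\|_2\|\x\|^3$ would produce an unwanted $\|\E\|_2$ factor and a different power of $\|\P\|_2$, failing to match the stated $\rho$; pairing one $\P^{1/2}$ with $\E\x$ is what recovers $\sqrt{v(\x)}$ cleanly.
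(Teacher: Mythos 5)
Your proof is correct and arrives at the stated $\rho$, but by a genuinely different route than the paper. The paper bounds the cubic term crudely, $|2\x^\top\E^\top\P\H(\x\otimes\x)|\le 2\|\E\|_2\|\P\|_2\|\H\|_2\|\x\|_2^3$, first identifies a Euclidean ball $\|\x\|_2<r$ with $r=\sigma^2_{\min}(\Q_f)/(2\|\E\|_2\|\P\|_2\|\H\|_2)$ on which $\dot v<0$, and only then converts this into a level-set radius via $v(\x)\le\|\x\|_2^2\|\P\|_2\|\E\|_2^2<r^2\|\P\|_2\|\E\|_2^2=\rho^2$, so the $\|\E\|_2$ and one power of $\|\P\|_2$ cancel only in that second step. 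Your Cauchy--Schwarz splitting $\P=\P^{1/2}\P^{1/2}$ produces the factor $\sqrt{v(\x)}$ directly and yields the condition $\sqrt{v(\x)}<\rho$ in one stroke, with no detour through a ball. The difference is more than cosmetic: what the theorem preceding the proposition requires is that $\dot v<0$ on the whole sublevel set $\{v\le\rho^2\}\setminus\{\bzero\}$, i.e., the containment ``ellipsoid inside the region where $\dot v<0$,'' and your argument establishes exactly that. The paper's conversion step, which bounds $v$ from \emph{above} by $\|\x\|_2^2\|\P\|_2\|\E\|_2^2$, instead shows that the ball $\{\|\x\|_2<r\}$ sits inside the ellipsoid $\{v<\rho^2\}$ --- the reverse containment; tightening that step would require $\lambda_{\min}(\E^\top\P\E)$ in place of $\|\P\|_2\|\E\|_2^2$ and would produce a smaller radius. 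So your derivation not only matches the stated constant but supplies the inclusion in the direction the proposition actually needs, which is a real advantage of pairing $\P^{1/2}$ with $\E\x$ rather than peeling off norms of $\E$ and $\P$ separately.
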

\begin{proof}
Since $\P$ is symmetric positive definite, and $\E$ is positive definite, the Lyapunov function $v(\x) =  \x^\top \E^\top \P \E\x$ is positive. We next consider the region where its derivative is negative. We have
\begin{align}
\dot{v}(\x) 
& = \dot{\x}^\top \E^\top \P \E \x  + \x^\top\E^\top \P\E  \dot{\x}, \\
& = [\A \x + \H(\x\otimes \x) ]^\top \P \E \x  + \x^\top \E^\top  \P [\A \x + \H(\x\otimes \x) ], \\
& = \x^\top [\A^\top \P \E + \E^\top  \P \A]\x + (\x^\top \otimes \x^\top ) \H^\top \P \E \x + \x^\top \E^\top \P \H  (\x \otimes \x), \\
& = - \x^\top \Q_f^\top \Q_f \x + 2 \x^\top \E^\top  \P \H (\x \otimes \x), \\
& \leq - \sigma^2_{\text{min}}(\Q_f) \Vert \x \Vert_2^2 + 2 \Vert \x \Vert_2^3 \Vert \E \Vert_2 \Vert \P \Vert_2 \Vert \H\Vert_2.
\end{align}
where $\sigma_{\text{min}}(\Q_f)$ is the smallest non-zero singular value of $\Q_f$. Thus we have that 
\begin{equation}
\dot{v}(\x)<0 \ \ \Leftrightarrow  \ \ \Vert \x \Vert_2 < \frac{\sigma^2_{\text{min}} (\Q_f)} {2 \Vert \E \Vert_2 \Vert \P \Vert_2 \Vert \H \Vert_2}.
\end{equation}
This shows that the zero equilibrium is locally asymptotically stable. To get an estimate of the domain of attraction~\eqref{eq:DA}, we need to find the radial upper bound on the Lyapunov function for all $\x$ where $\dot{v}(\x)<0$. In particular, 
\begin{equation}
v(\x)  
= \x^\top \E^\top \P \E \x 
\leq  \Vert \x \Vert_2^2 \Vert \P \Vert_2 \Vert \E \Vert_2^2
< \frac{\sigma^4_{\text{min}} (\Q_f)} {4 \Vert \P \Vert_2 \Vert \H \Vert^2_2} = \rho^2
\end{equation}
which yields the claimed result after taking a square root.
\end{proof}
\begin{remark}
	Several observations are in order. First, for $\H=\bzero$ we see that $\rho = \infty$, so the stability region is $\Real^n$, which is consistent with the assumption that $\A$ is Hurwitz. 
	Second, note that $\rho$ is always nonzero as $\sigma_{\text{min}}(\Q_f)$ is the smallest non-zero singular value of $\Q_f$, and the Lyapunov matrix is positive definite for the right-hand-side $\Q_f^\top\Q_f$. Therefore $\x_e =\bzero$ is guaranteed to be a stable solution of the QB system as long as $\A$ is Hurwitz. However, the size of the corresponding stability domain (indicated by $\rho$) intuitively is inversely proportional to $\Vert \H \Vert_2$. For weak nonlinear terms, the estimate of the stability domain is larger, and for strong nonlinear terms, this estimate shrinks. 
	Third, as with most analytical  approaches, the result in Proposition~\ref{prop:analyticalEstimate} is rather conservative. Computational approaches can enlarge the estimate of the stability domain, which we discuss next. 
\end{remark}

\section{Estimating the stability domain via optimization} \label{sec:ellipsoidalEst}
We present an optimization-based approach to enlarge the estimate of the stability domain.  For the purpose of this analysis we rewrite the QB system in the form
\begin{equation}
\E \dot{\x}  = \A \x + \sum_{i=1}^{n} x_i \K_i\x,
\end{equation}
with $\x(0)=\bzero$ and where $x_i$ is the $i$th component of the vector $\x$, $\K_i \in \Real^{n\times n}$ are matrices such that $\K_i = \H (:, (i-1)n+1 : in)$ for $i=1, 2, \ldots n$.
We observe that the quadratic part of the right-hand side is invariant under skew-symmetric matrix additions, namely  
$$ 
\sum_{i=1}^{n} x_i \K_i\x = \sum_{i=1}^{n} x_i [\K_i + \S_i] \x
$$ 
for any skew-symmetric matrices $\S_i$. 
Therefore, the nonlinearity can be parametrized with the vector
\begin{equation} \label{eq:dn}
\bmu = \text{vec}([\S_1, \S_2, \ldots, \S_n] ) \in \Real^{d_n}, \qquad d_n = n^2(n-1)/2.
\end{equation}
where vec() stacks the columns of the matrix into a column vector. 
We then define
\begin{equation}
\F(\x,\bmu) = \sum_{i=1}^{n} x_i [\K_i + \S_i] = \sum_{i=1}^{n} x_i \M_i(\bmu), 
\end{equation}
and note that $\F(\x,\bmu_1)\x = \F(\x,\bmu_2)\x$ for $\bmu_1\neq \bmu_2$ but $\F(\x,\bmu_1) \neq \F(\x,\bmu_2)$.
The parametrized dynamics of the quadratic system can be written as 
\begin{equation} \label{eq:QB-parametrized}
	\E \dot{\x}  = \A \x + \F(\x, \bmu)\x.
\end{equation}
This parametrized formulation is the basis for the optimization routine to estimate the stability domain.

In order to compute a less conservative estimate of the stability domain, we define a program to maximize the stability radius $\rho$, as in~\cite{tesi1996stability-quadratic-Lyapunov-function}.  The optimization problem becomes
\begin{equation}  \label{eq:OptProblem}
\begin{aligned}
{\rho^*}^2 & = \inf_{\x \in \Real^n} \ \x^\top \E^\top \P \E \x, \\
\text{s.t.} & \quad \dot{v}(\x) = 0.
\end{aligned}
\end{equation}
The derivative of $v(\cdot)$ along trajectories of equation~\eqref{eq:QB-parametrized} is
\begin{align}
\dot{v}(\x) & = \dot{\x}^\top \E ^\top \P \E \x  + \x^\top\E ^\top  \P \E \dot{\x} \\
& =  [\A \x + \F(\x, \bmu)\x]^\top \P \E \x  + \x^\top \E^\top \P [\A \x + \F(\x, \bmu)\x] \\
& = \x^\top \left [-\Q + \sum_{i=1}^n x_i [\M_i(\bmu)^\top \P \E +  \E^\top \P \M_i(\bmu)  ] \right ] \x
\end{align}
for the positive definite matrix $-(\A^\top \P \E  + \E^\top \P \A) = \Q = \Q_f^\top \Q_f$. Let $\P = \P_f^\top \P_f$,  and define the matrices
\begin{equation*}
\G(\bmu) =  \left [ \M_1(\bmu)^\top \P \E + \E^\top \P \M_1(\bmu) \vert \ldots \vert \M_n(\bmu)^\top \P \E + \E^\top \P \M_n(\bmu)  \right]^\top \in \Real^{n^2\times n}
\end{equation*}
as well as
\begin{equation}
\J(\bmu) = (\P_f^\top \otimes \Q_f^\top)^{-1} \G(\bmu) \Q_f^{-1} \in \Real^{n^2\times n} \label{eq:J}
\end{equation}%
which is \textit{affine linear} in the parameters $\bmu$, a property that is important for optimization later\footnote{If the Lyapunov equation~\eqref{eq:LyapMatrix} is solved with low-rank methods, see e.g., the survey~\cite{benner2013numerical}, then $\P_f^\top \in \Real^{n\times n_r}$ with $n_r \ll n$, in which case the inverse in \eqref{eq:J} becomes a pseudo-inverse.}. 
Taken together, we have 
\begin{equation}
\dot{v}(\x) =  \x^\top \Q_f^\top \left [ -\I_n + (\x^\top \P_f^\top \otimes \I_n)\J(\bmu) \right ] \Q_f \x.
\end{equation}
The next theorem then addresses the solution of the optimization problem~\eqref{eq:OptProblem}. 

\begin{theorem} \cite{tesi1996stability-quadratic-Lyapunov-function} \label{thm:alpha}
For a given Lyapunov matrix \ $\P$, a guaranteed stability domain $\mathcal{D}(\rho) \subseteq \mathcal{A}(\bzero)$ is given through
\begin{equation}
\rho < \rho^* = \frac{1}{\alpha^*},
\end{equation}
where 
\begin{equation}
\alpha^* = \min_{\bmu \in \Real^{d_n}} \Vert \J(\bmu) \Vert_2. \label{eq:min_alpha}
\end{equation}
Moreover, the estimate is optimal, i.e., $\rho^*$ is the maximum achievable value of $\rho$ in equation~\eqref{eq:DA}.
\end{theorem}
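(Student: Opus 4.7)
The plan is to rewrite the identity $\dot v(\x) = 0$ in a normalized form using the Cholesky-type factors $\P_f$ and $\Q_f$, apply a Cauchy--Schwarz / spectral-norm bound, and then exploit the freedom in $\bmu$ to make the bound as tight as possible; optimality would be established afterwards by producing an $\x$ that saturates the inequality.

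First I would change variables via $\y = \Q_f \x$ and $\w = \P_f \E \x$, under which the dissipative term becomes $-\x^\top \Q \x = -\|\y\|_2^2$ and the Lyapunov value reads $v(\x) = \|\w\|_2^2$. Inverting the definition of $\J$ gives $\G(\bmu) = (\P_f^\top \otimes \Q_f^\top)\,\J(\bmu)\,\Q_f$, and the Kronecker identity $(\A \otimes \B)(\C \otimes \D) = (\A\C) \otimes (\B\D)$ allows the nonlinear contribution $\x^\top (\x^\top \otimes \I_n) \G(\bmu) \x$ to collapse to the compact form $(\w^\top \otimes \y^\top)\,\J(\bmu)\,\y$. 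Thus
\[
\dot v(\x) \;=\; -\|\y\|_2^2 \;+\; (\w^\top \otimes \y^\top)\,\J(\bmu)\,\y.
\]

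On $\{\dot v(\x) = 0\}\setminus\{\bzero\}$, Cauchy--Schwarz together with the Kronecker norm identity $\|\w^\top \otimes \y^\top\|_2 = \|\w\|_2\|\y\|_2$ yields $\|\y\|_2^2 \le \|\w\|_2\,\|\y\|_2^2\,\|\J(\bmu)\|_2$, and hence $\|\w\|_2 \ge 1/\|\J(\bmu)\|_2$. Crucially, the scalar $\dot v(\x)$ is, as a function of $\x$, independent of $\bmu$ because the skew-symmetric additions $\S_i$ cancel under the double contraction by $\x$; consequently the lower bound is valid for \emph{every} admissible $\bmu \in \Real^{d_n}$, and minimizing over $\bmu$ gives $v(\x) = \|\w\|_2^2 \ge 1/(\alpha^*)^2$ on the set $\{\dot v = 0\}\setminus\{\bzero\}$. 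Combined with the Lyapunov characterization from Section~\ref{sec:DA} and the fact that $\dot v < 0$ in a neighborhood of the origin, the sublevel set $\{v(\x) \le \rho^2\}$ is then contained in $\mathcal{A}(\bzero)$ for every $\rho < 1/\alpha^*$.

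For the optimality assertion that $\rho^* = 1/\alpha^*$ is the supremum, I would construct a saturating point. Let $\bmu^\star$ attain $\alpha^* = \|\J(\bmu^\star)\|_2$ and let $(\sigma^*,\y_0,\z_0)$ be an associated top singular triple; taking $\y$ proportional to $\y_0$ and rescaling so that Cauchy--Schwarz becomes an equality gives a nonzero $\x^\star$ with $\dot v(\x^\star) = 0$ and $v(\x^\star) = (\rho^*)^2$. The main obstacle is that $\y$ and $\w$ are not independent parameters but are rigidly coupled through $\w = \P_f \E \Q_f^{-1}\y$, so one cannot freely place $\w$ along the left singular direction of $\J(\bmu^\star)$. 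Saturation must therefore be obtained through the affine-linear freedom in $\bmu$: the first-order optimality conditions for the convex problem \eqref{eq:min_alpha} must align the singular geometry of $\J(\bmu^\star)$ with the fixed algebraic coupling between $\y$ and $\w$, which is precisely the role of the skew-symmetric parametrization $\S_i$. Verifying this compatibility is the most delicate step of the argument and is what upgrades the lower bound of the previous paragraph to a sharp equality.
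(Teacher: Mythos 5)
The paper does not actually prove this theorem: it is quoted from \cite{tesi1996stability-quadratic-Lyapunov-function}, and the surrounding text only supplies the representation $\dot v(\x) = \x^\top\Q_f^\top\left[-\I_n + (\x^\top\P_f^\top\otimes\I_n)\J(\bmu)\right]\Q_f\x$ that makes the statement meaningful. Your attempt must therefore be judged as a reconstruction of the cited proof. The first half---the containment $\mathcal{D}(\rho)\subseteq\mathcal{A}(\bzero)$ for $\rho<1/\alpha^*$---follows the right route: normalize, apply Cauchy--Schwarz on $\{\dot v=0\}\setminus\{\bzero\}$, use the $\bmu$-independence of $\dot v$ to take the best bound over $\bmu$, and finish with the Lyapunov argument on the connected sublevel set. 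Two caveats, though. First, with the paper's definition of $\J$ the collapse of $\x^\top(\x^\top\otimes\I_n)\G(\bmu)\x$ produces the left Kronecker factor $(\P_f\x)^\top$, not $(\P_f\E\x)^\top$; so Cauchy--Schwarz lower-bounds $\Vert\P_f\x\Vert_2$, whereas $v(\x)=\Vert\P_f\E\x\Vert_2^2$. These coincide only for $\E=\I_n$, and for general $\E$ your chain $v(\x)\ge 1/(\alpha^*)^2$ does not close without redefining $\J$ with $\E^\top\P_f^\top$ in place of $\P_f^\top$ (or paying a condition-number factor). Second, your justification for $\bmu$-independence is not the right one: the relevant term in $\dot v$ is $2(\E\x)^\top\P\,\F(\x,\bmu)\x$, so the matrix $\E^\top\P$ sits between the two copies of $\x$ and $\x^\top\E^\top\P\S_i\x$ need not vanish for skew-symmetric $\S_i$; what is actually needed is invariance of $\F(\x,\bmu)\x$ under a \emph{single} contraction, which is what the parametrization is meant to guarantee.

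The genuine gap is the optimality claim. You correctly isolate the obstruction---$\mathbf{w}$ and $\y$ are rigidly coupled through $\mathbf{w}=\P_f\E\Q_f^{-1}\y$, so one cannot saturate Cauchy--Schwarz by placing $\y$ and $\mathbf{w}$ independently along the singular directions of $\J(\bmu^\star)$---but you then assert, rather than prove, that the first-order optimality conditions of the convex program \eqref{eq:min_alpha} resolve it. That assertion \emph{is} the ``moreover'' clause of the theorem and is the hard step in \cite{tesi1996stability-quadratic-Lyapunov-function}: one needs a duality or theorem-of-alternatives argument showing that if no point of $\{\dot v=0\}\setminus\{\bzero\}$ attains $v(\x)=1/(\alpha^*)^2$, then $\bmu^\star$ could be perturbed within the affine family to strictly decrease $\Vert\J(\bmu)\Vert_2$, contradicting optimality. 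Naming this ``the most delicate step'' does not discharge it; as written, your argument establishes only the one-sided guarantee that every $\rho<1/\alpha^*$ yields an estimate of the domain of attraction, not that $\rho^*=1/\alpha^*$ is the maximum achievable value in \eqref{eq:DA}.
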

We observe that the optimization problem~\eqref{eq:min_alpha} is \textit{convex}, as it involves the two-norm computation of an matrix that is affine linear in $\bmu$. We can therefore employ special convex optimization solvers, which makes this problem formulation appealing, despite the polynomial growth (see \eqref{eq:dn}) of the optimization variables with respect to the dimension of the model.

\section{Numerical results} \label{sec:numerics}
We present three different test cases,  spanning different PDE models, as well as different techniques to obtain the ROMs. This shows the wide applicability of this framework, as it is agnostic to the way the ROM was obtained. 
We consider Burgers' equation in Section~\ref{sec:Burgers-numerics} for which we obtain ROMs through LQG-balanced truncation, and in Section~\ref{sec:FHN-numerics} we use POD projection-based model reduction to obtain ROMs for the FitzHugh-Nagumo system. Section~\ref{sec:BurgersOPINF-numerics} then illustrates our framework on a ROM that was learned purely from data, where that data was generated from yet a  different configuration of Burgers' equation. 

To solve the optimization problem~\eqref{eq:min_alpha}, we use Matlab's \texttt{fmincon} optimizer with relative tolerances \texttt{TolX=0.1}, \texttt{tolFun=0.001}, \texttt{MaxIter=1000}, random initial condition $\bmu_0$, and we bound the entries $\bmu^{(i)}$ of the vector $\bmu$ within the optimization as $-10^4\leq \bmu^{(i)} \leq 10^4$.

\subsection{LQG-balanced ROM for Burgers' equation} \label{sec:Burgers-numerics}

\subsubsection{Burgers' equation and discretization}
We consider the one-dimensional Burgers' equation following the setup in \cite{borggaard2020quadratic}. The PDE model is 
\begin{equation}\label{eq:BurgersPDE1}
\dot{z}(\xi, t) = \epsilon z_{\xi\xi}(\xi, t) - \frac{1}{2}(z^2(\xi, t))_\xi + \sum_{k=1}^m \chi_{[(k-1)/m,k/m]}(\xi) u_k(t)
\end{equation}
for $t>0$, $\xi \in [0,1]$ is the spatial variable and $z(\xi, 0) = z_0(\xi) = 0.5 \sin(2\pi \xi)^2$ for $\xi \in [0,0.5]$ and zero otherwise. The notation $z_{\xi\xi}(\xi,t) := \frac{\partial^2}{\partial \xi^2}z(\xi,t)$ denotes a second order spatial derivative; similarly, $z_\xi(\xi,t)$ denotes a first spatial derivative. Moreover, $z \in H_{\text{per}}^1(0,1)$, which means that the system has periodic boundary conditions. Here, $u_i(t), i=1, 2, \ldots, m$ are the input functions. The function $\chi_{[a,b]}(x)$ denotes the characteristic function on $[a,b]$. Practically, this means that the spatial domain is subdivided into $m$ intervals of equal length, and each control is applied in one of the corresponding intervals.
When discretized with linear finite elements, the $N$-dimensional semi-discretized system has the form
\begin{eqnarray}
\E_N \dot{\x}_N &=&\A_N \x + \H_N (\x_N \otimes \x_N) +\B_N \u \qquad \x_N(0)=\x_{N,0} \\
\y_N &=& \C_N \x_N. 
\end{eqnarray}
The output matrix $\C_N$ produces an observation $\y_N$ of the system. Here, we observe the entire state, so $ \C_N = \I_N$. We choose $m=3$ inputs and the viscosity is set to $\epsilon = 10^{-3}$ to make the nonlinear quadratic term dominant. The model dimension is $N=101$, which is still not tractable for stability domain computation.

\subsubsection{LQG-balanced reduced-order model}
The ROM for Burgers' equation is obtained through the LQG-balancing framework~\cite{jonckheere1983new}, which also allows for finding a proper energy function.  For linear time-invariant systems, the LQG-balancing framework finds a coordinate transformation (and subsequent reduction) so that the LQG solutions to the algebraic Riccati equations of the ROM are equal and diagonal. This yields a ROM with favorable control-theoretic properties. We outline the main steps of this well-known method here, and refer to \cite{jonckheere1983new,antoulas05} for details. First, we compute the solutions to the LQG algebraic Riccati equations based on the linearized system matrices, i.e., 
\begin{equation} \label{eq:ARE}
\begin{aligned}
\A_N\P_N +\P_N \A_N ^{\top} -\P_N \C_N^{\top}\C_N\P_N +\B_N\B_N^{\top} &= \bzero, \\
\A_N^{\top}\Q_N +\Q_N\A_N  -\Q_N\B_N\B_N^{\top}\Q_N +\C_N^{\top}\C_N &= \bzero.
\end{aligned}
\end{equation}
Since $ \P_N$ and $\Q_N$ are symmetric positive definite they can be used to define a Lyapunov function in \eqref{eq:lyapunovFunction}, in a similar way as the symmetric positive definite solutions to the linear Lyapunov equation are used.  The transformation requires the Cholesky factors $\P_N =\R_N\R_N^\top$ and $\Q_N =\L_N\L_N^\top$, and then the singular value decomposition 
$$
\U_N \bSigma_N  \W_N =\L_N^\top\R_N. 
$$
Based on the singular value decay, we choose a truncation order $n\ll N$, and define $\bSigma_n = \bSigma_N(1:n,1:n)$. The LQG-BT projection matrices are $\T_n = \R_N(:,1:n)\W_N(:,1:n) \bSigma_n^{-\frac{1}{2}}$ and $\T_n^{-1} = \bSigma_n^{-\frac{1}{2}} \U_N(:,1:n)^\top \L_N(:,1:n)^\top$, which yields the ROM matrices in $n$-dimensions:
\begin{equation}
\A = \T_n^{-1} \A_N \T_n, \qquad \B = \T_n^{-1}\B_N, \quad \C =\C_N\T_n, \quad \H = \T_n^{-1} \H_N (\T_n \otimes \T_n). 
\end{equation}
Note that the ROM is a quadratic model of dimension $n$ and we only used the matrices of the linearized system to compute the transformations.
Per definition of the LQG balancing transformation, the matrices in reduced dimensions satisfy
\begin{equation} \label{eq:redARE}
\begin{aligned}
\A \bSigma_n + \bSigma_n  \A ^{\top} - \bSigma_n \C^{\top} \C \bSigma_n + \B \B^{\top} &= \bzero, \\
\A^{\top} \bSigma_n + \bSigma_n \A  - \bSigma_n \B \B^{\top} \bSigma_n + \C^{\top} \C &= \bzero.
\end{aligned}
\end{equation}

Figure~\ref{fig:Burgers_LQG}, left, shows the singular values for the matrix $\L_N^\top \R_N$, which are typically used to decide about the truncation of the system. The singular values decay rather slowly at first, but have a steep drop at $n=20$, after which they become machine zero. Therefore, increasing $n$ beyond twenty should not be expected to yield better results ROMs. We compute ROMs of dimensions $n=3, 5, 7, \ldots, 21$, for which we analyze the stability domain in the next section.

\subsubsection{Stability domain computation}
We have seen that  $\bSigma_n$ satisfies the LQG Riccati equations~\eqref{eq:redARE}, i.e., it is symmetric positive definite (trivially so, as a diagonal matrix). Thus, we can choose $\P = \bSigma_n$ to define a Lyapunov function $v(\x) = \x^\top \bSigma_n \x$, and moreover $\P_f = \bSigma_n^{1/2}$ and $\Q_f = \C$.  We then solve the optimization problem~\eqref{eq:OptProblem} to compute the optimal estimate $\rho^*$.
Figure~\ref{fig:Burgers_LQG}, right, shows the stability radii $\rho$ obtained from the analytical estimate in Proposition~\ref{prop:analyticalEstimate}, and the optimal estimate $\rho^*$ from Theorem~\ref{thm:alpha}. 

We observe that the optimized stability domain is several orders of magnitude larger than the conservative analytical estimate from Proposition~\ref{prop:analyticalEstimate}. 
Furthermore, we observe that while the estimate $\rho^*$ initially reduces significantly until $n=13$, the stability region again increases until $n=21$. This hints at the fact that as the ROM increases in fidelity, it also becomes more stable. This observation is not possible by considering only the analytical estimate $\rho$, and is therefore a major appeal to using the optimized estimate. 
The analytic estimate from Proposition~\ref{prop:analyticalEstimate} for the full-order model is $\rho_{\text{FOM}}=9.81\times 10^{-4}$. Note, that the ROM-based analytic estimate appears to converge to zero, therefore not approaching the FOM analytical estimate. We observe that the  optimization-based result for the ROM is much closer to $\rho_{\text{FOM}}$. Due to the high dimension of the state space, the optimization-based estimate $\rho^*_{\text{FOM}}$ was not computationally tractable.

We note that we tried the SMRSOFT toolbox \url{https://www.eee.hku.hk/~chesi/y_smrsoft.htm}, but for $n\in \{2,3\}$ it did not yield a lower bound other than zero, and for higher-order systems it became very cumbersome to implement due to the toolbox not accepting matrix-vector multiplications in right-hand-side. The toolbox seemingly was written for low-order systems, and our work is specifically aimed at high-order systems.

\begin{figure}[ht!]
	\begin{subfigure}{}
		\includegraphics[width=0.48\textwidth]{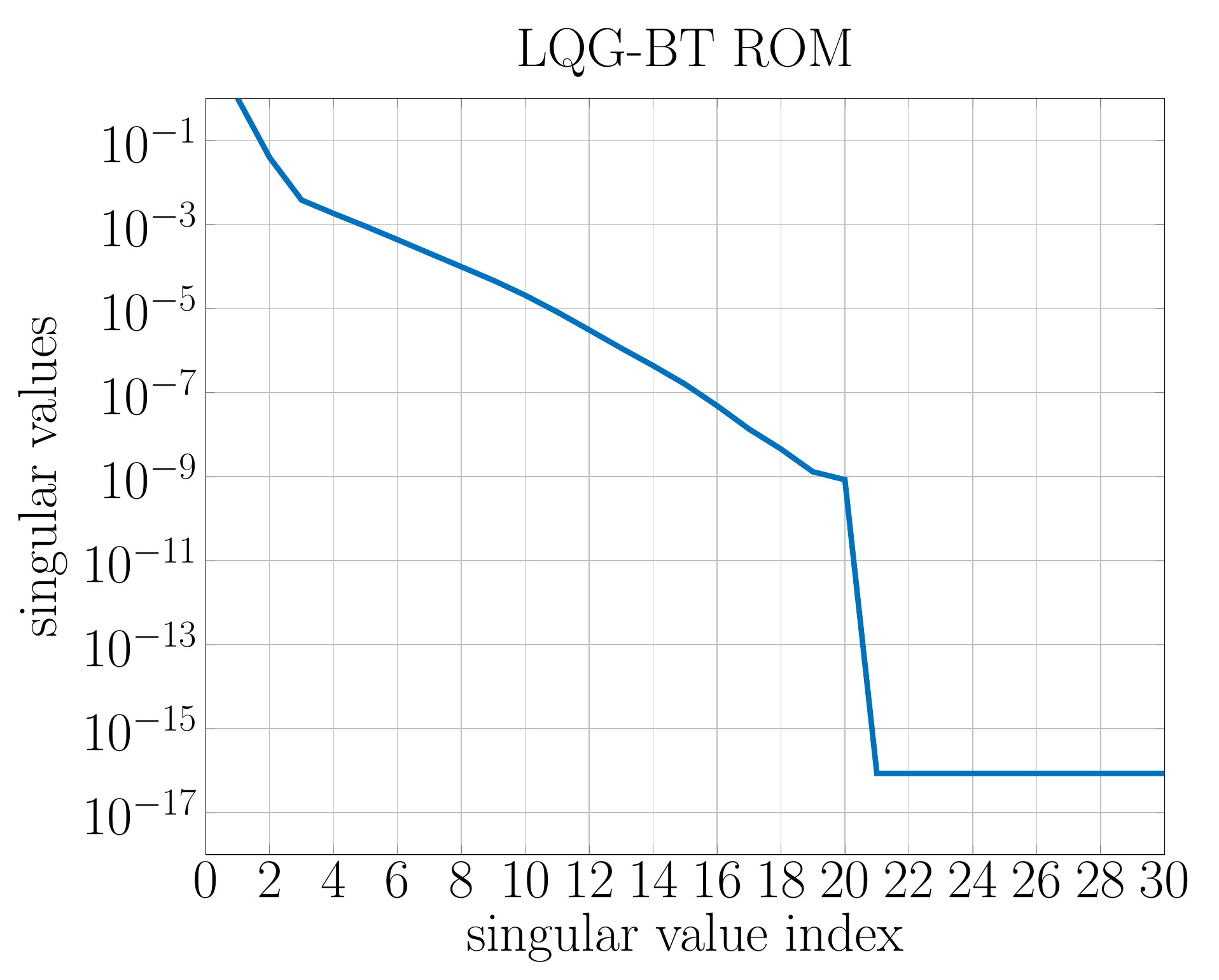}
	\end{subfigure}	
	\begin{subfigure}{}
		\includegraphics[width=0.48\textwidth]{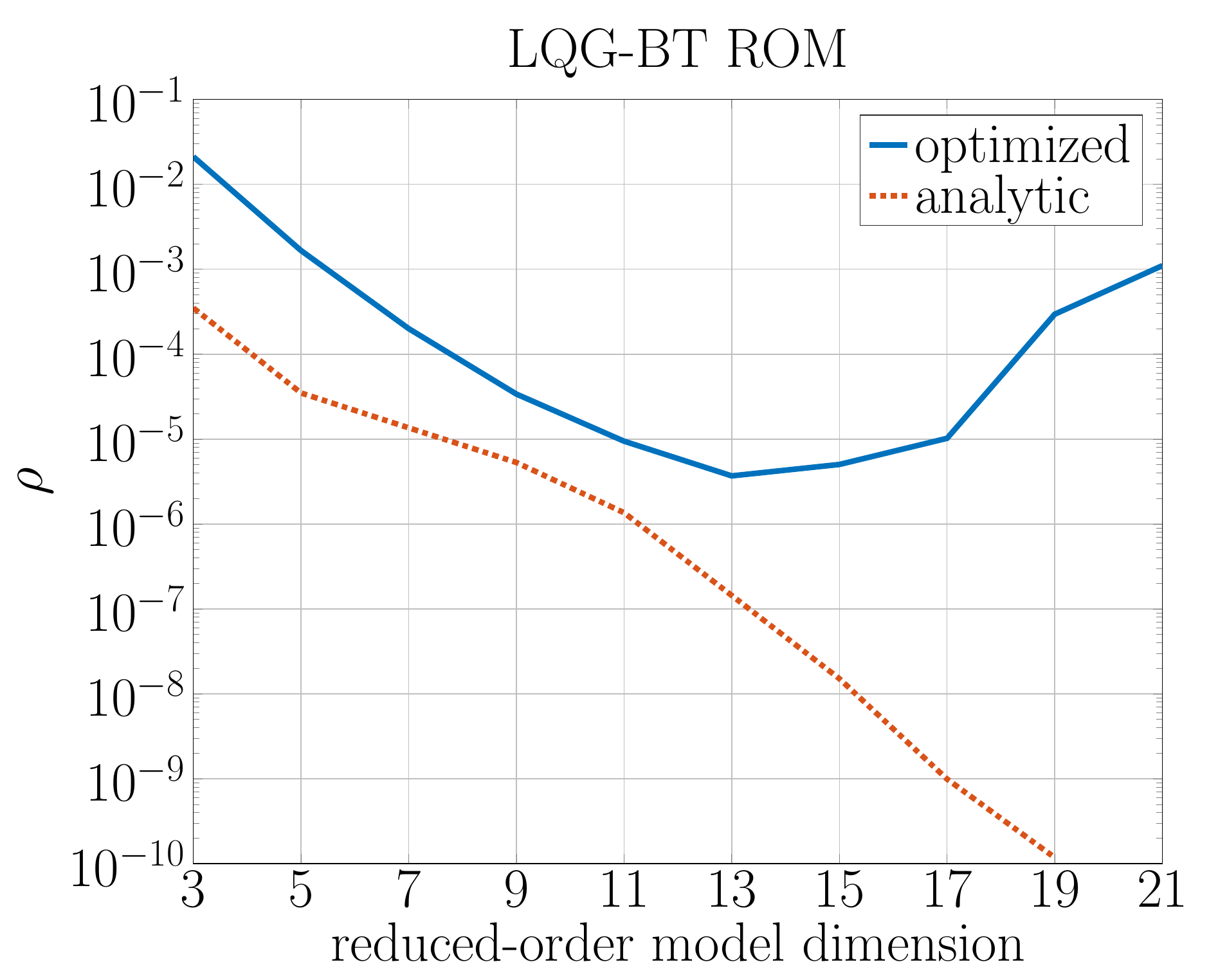}
	\end{subfigure}	
	\caption{Burgers' equation: (Left) Singular values of the matrix $\L_N^\top \R_N$ in LQG balancing; (Right) The value of $\rho$ for the stability domain estimate for the analytic method from Proposition~\ref{prop:analyticalEstimate} and the optimal estimate $\rho^*$ from Theorem~\ref{thm:alpha}. }
	\label{fig:Burgers_LQG}
\end{figure}

\subsection{Proper orthogonal decomposition ROM for FitzHugh-Nagumo equation} \label{sec:FHN-numerics}

\subsubsection{FitzHugh-Nagumo equation and discretization}
This section illustrates our nonlinear model reduction approach on the FitzHugh-Nagumo system, which is a model for the activation and deactivation of a spiking neuron.  The original FitzHugh-Nagumo is a cubic system and here we consider a lifted quadratic bilinear model, see~\cite{bennerBreiten2015twoSided,KW18nonlinearMORliftingPOD} for more details on the model. 
The lifted QB system then reads as
\begin{equation*} 
\begin{aligned}
\epsilon \dot{v}(\xi,t) & = \epsilon^2 v_{\xi\xi}(\xi,t)  - v(\xi,t)^3 + 0.1 v(\xi,t)^2 - 0.1v(\xi,t) - w(\xi,t) + c, \\
\dot{w}(\xi,t) &  = hv(\xi,t) - \gamma w(\xi,t) + c\\
\dot{z}(\xi,t) & =2 [\epsilon^2 vv_{\xi\xi}(\xi,t)  - z^2(\xi,t) + 0.1 z(\xi,t) v(\xi,t) - 0.1 z(\xi,t)  - w(\xi,t) v(\xi,t) + c v(\xi,t)].
\end{aligned}
\end{equation*}
where $\xi \in [0,L]$ is the spatial variable and the time horizon of interest is $t \in [0,t_f]$. The states of the system are voltage $v(\xi,t)$ and recovery of voltage $w(\xi,t)$. 
The initial conditions are specified as $v(\xi,0)=w(\xi,0)=z(\xi,0)=0$ for $\xi \in [0,L]$, and the boundary conditions are $v_\xi(0,t) = u(t)$, where $u(t) = 5\times 10^4 \ t^3 \exp (-15t)$ so the system is excited through the boundary; $v_\xi(L,t) =0$ for $t\geq 0$; $z_\xi(L,t) = 2 v(L,t), z_s(0,t) = 2 v(0,t)u(t)$.
In the problem setup we consider, the parameters are given by $L=0.1$, $c=0.05$, $\gamma =2$, $h=0.5$, and $\epsilon = 0.015$.
The PDE model is semi-discretized in space by using finite differences, resulting in the finite-dimensional QB system
\begin{equation*}
\E_N \dot{\x}_N = \A_N \x_N + \B_N \u + \H_N (\x_N \otimes \x_N) + \sum_{k=1}^2 \N_{N,k} \x_N \u_k ,
\end{equation*}
where $\E_N=\epsilon \I_{N}$ is diagonal, $\A_N,\N_{N,1}, \N_{N,2} \in \mathbb{R}^{N\times N}$ and $\H_N \in \mathbb{R}^{N\times N^2}$. The input matrix is $\B_N \in \mathbb{R}^{N\times 2}$, with the second column of $\B_N$ being copies of $c$ (the constant in the FHN PDE) and the first column of $\B_N$ having a 1 at the first entry. Thus, the input $\u = [u(t), 1]$. 
Here, each variable is discretized with $200$ degrees of freedom, i.e., the overall dimension of the QB model is $N=600$.

\subsubsection{Proper orthogonal decomposition reduced-order model}
We generate a ROM via the method of proper orthogonal decomposition~\cite{holmes_lumley_berkooz_1996} and follow directly the implementation in~\cite{KW18nonlinearMORliftingPOD}. 
We simulate the system for $t_f = 12s$ with Matlab's \texttt{ode15s} solver with \texttt{'RelTol',1e-8,'AbsTol',1e-10} tolerances, and collect simulated data every 0.1s, for a total of 120 snapshots. For each of the three discretized variables we compute a POD basis of order $n$, where we use the same $n$ for each variable. The POD basis is the optimal basis to represent the snapshot set in the $\ell_2$ sense. The projection matrix is then assembled as a block-diagonal matrix with the POD basis for each variable as blocks.  %
Thus, the ROM is the $3n$ dimensional ROM
\begin{equation*}
\E \dot{\x} = \A \x + \B \u + \H (\x \otimes \x) + \sum_{k=1}^2 \N_{k} \x \u_k.
\end{equation*}
Figure~\ref{fig:FHN_POD}, left, shows the decay of the singular values of the POD snapshot matrices for each variable. We observe that until $n=12$ the singular values drop significantly, after which a slow decay appears. Therefore, for $n\leq12$, increasing $n$ should lead to improved ROM accuracy.

\subsubsection{Stability domain computation}
For the Lyapunov matrix, we choose $\Q = \I_n$ and compute the corresponding solution $\P$ of the Lyapunov equation~\eqref{eq:LyapMatrix}. We again compute the stability radii $\rho$ obtained from the analytical estimate in Proposition~\ref{prop:analyticalEstimate}, and the optimal estimate $\rho^*$ from Theorem~\eqref{thm:alpha}. 
Figure~\ref{fig:FHN_POD}, right, shows the obtained results, where the horizontal axis plots the reduced dimension $3n$, so for the ROM of dimension 21, each variable was approximated with seven POD modes. Similar to the previous example, we observe that the optimal computed estimate is roughly three orders of magnitude larger than the analytic one. Moreover, this time the size of the stability domain seems to stay approximately the same after six modes are used. 
We also compute the analytic estimate from Proposition~\ref{prop:analyticalEstimate} for the full-order model, and obtain $\rho_{\text{FOM}}=2.27\times 10^{-6}$. We observe that the ROM stability radius approximates the FOM stability radius well, within the same order of magnitude. This, together with a measure of accuracy of predictions could give confidence in having a good reduced-order model strategy.  Due to the high dimension of the state space, the optimization-based estimate $\rho^{*}_{\text{FOM}}$ was not computationally tractable.

\begin{figure}[!ht]
	\begin{subfigure}{}
		\includegraphics[width=0.48\textwidth]{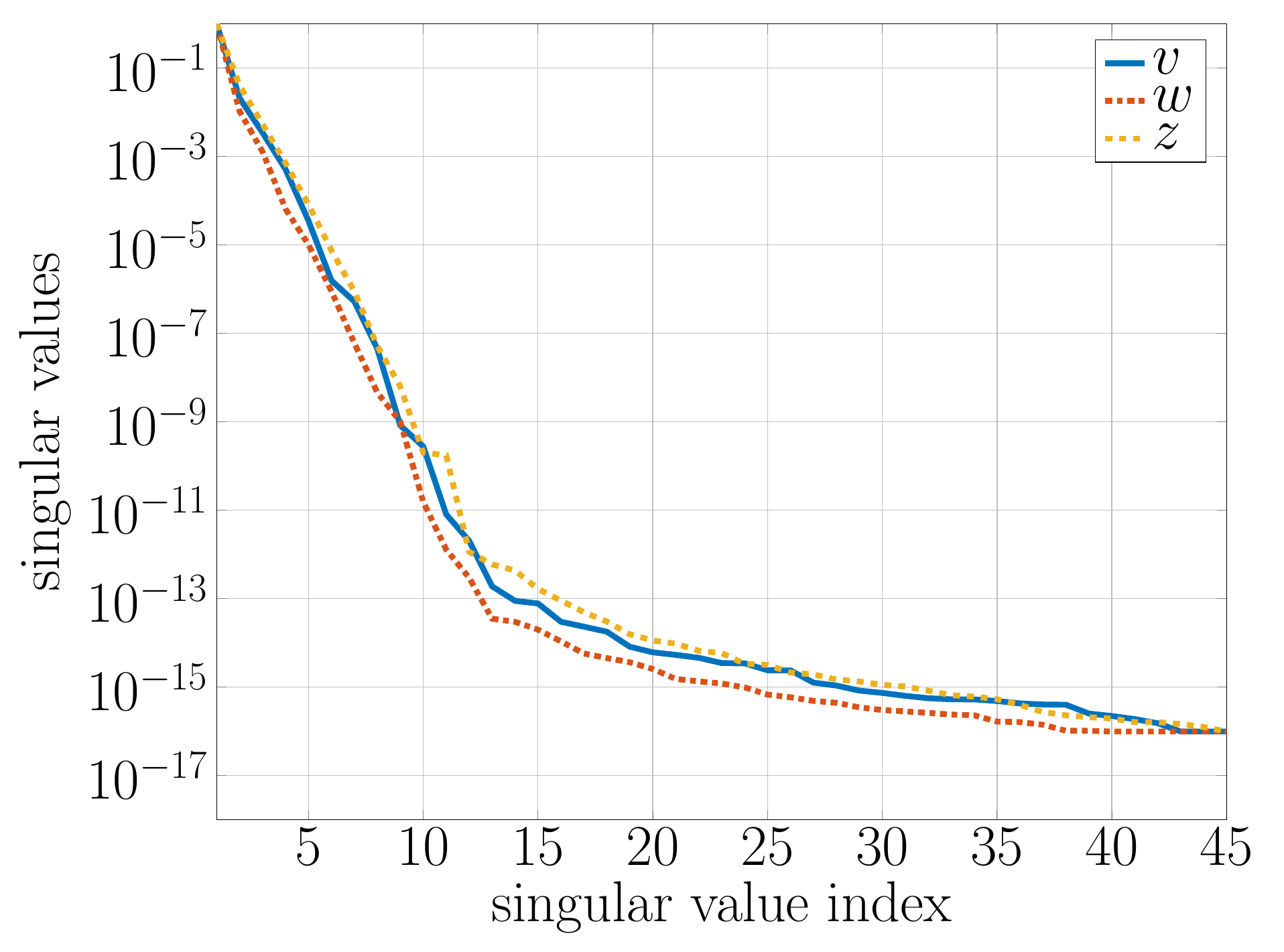}
	\end{subfigure}	
	\begin{subfigure}{}
		\includegraphics[width=0.48\textwidth]{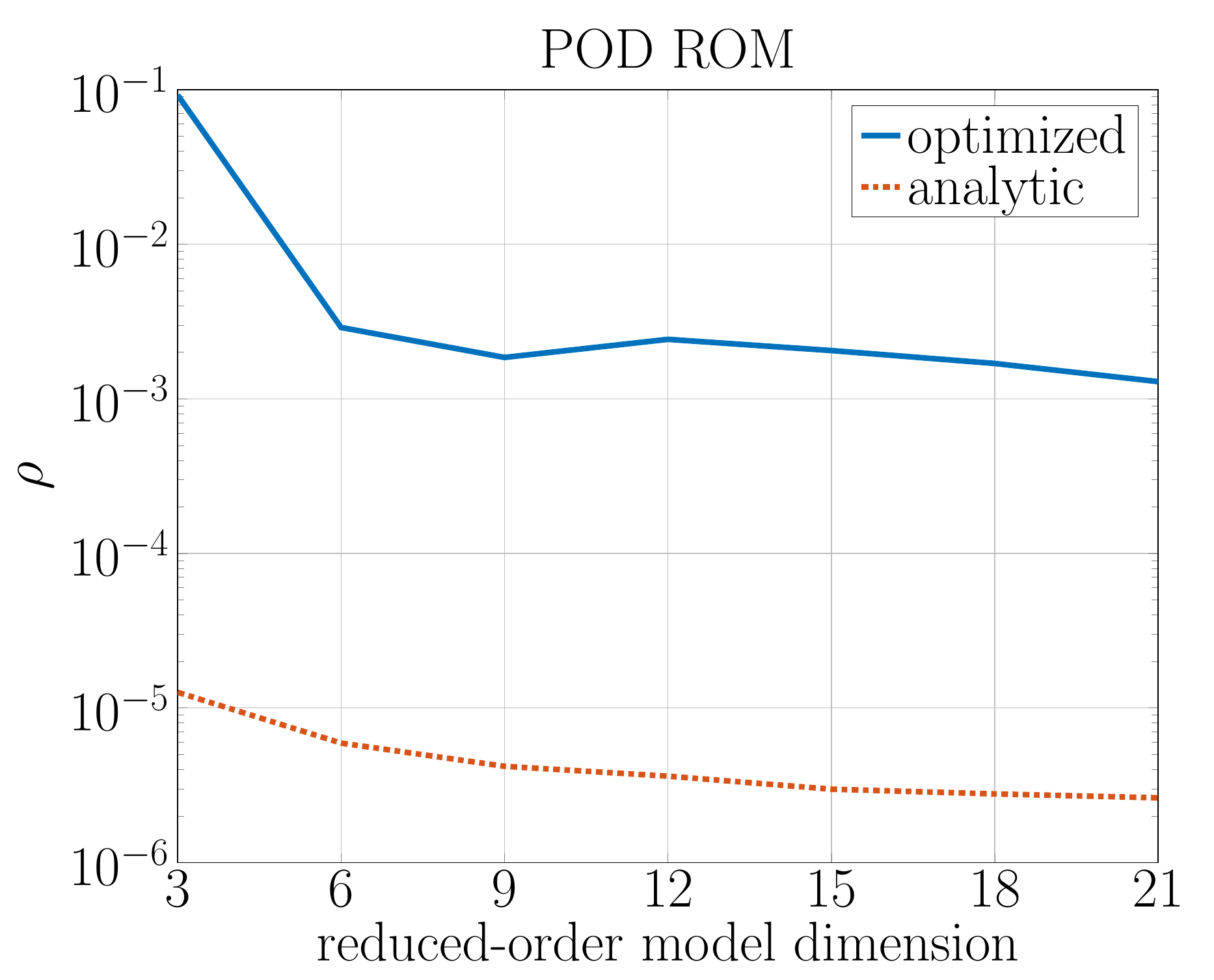}
	\end{subfigure}	
	\caption{FitzHugh-Nagumo equation: (Left) Singular values of the POD snapshot matrix for each variable. (Right) The value of $\rho$ for the stability domain estimate for the analytic method from Proposition~\ref{prop:analyticalEstimate} and the optimal estimate from Theorem~\ref{thm:alpha}.}
	\label{fig:FHN_POD}
\end{figure}

\subsection{Non-intrusive ROM for Burgers' equation} \label{sec:BurgersOPINF-numerics}
In this example, we demonstrate one of the major appeals of this method, namely that we can apply it to non-intrusive ROMs. The model presented here is learned from data of Burgers' equation using the operator inference method following the setup in \cite[Sec 4.2]{peherstorfer2016data}. This setup deviates from Section~\ref{sec:Burgers-numerics} in the discretization parameters and implementation of the control term. 

\subsubsection{Learned reduced-order model}
We are given a simulator for the following Burgers' equation PDE:
\begin{equation}\label{eq:BurgersPDE2}
\dot{z}(\xi, t) = \epsilon z_{\xi\xi}(\xi, t) - \frac{1}{2}(z^2(\xi, t))_\xi
\end{equation}
for $t>0$, $z(\cdot, 0) = 0$, and the input enters through the Dirichlet boundary condition, $z(t,0) = u(t)$ and $z(t,1) = -u(t)$. 
We generate non-intrusive ROMs via the operator inference\footnote{For a Matlab code for this model, and the operator inference approach, see \url{https://github.com/elizqian/operator-inference}. A scalable Python implementation can be found at \url{https://github.com/Willcox-Research-Group/rom-operator-inference-Python3}.} approach from~\cite{peherstorfer2016data}.
The data are generated from a finite difference solver with equidistant grid of $N=128$. The viscosity is set to $\epsilon = 10^{-1}$ and the model is simulated for $t\in [0,t_f]$ and $t_f=1$s. The one-dimensional input $u(t)$ is generated via Matlab's \texttt{rand} command with setting \texttt{rng default} for reproducibility.  We collect snapshots of the state every $dt = 10^{-4}$ steps and store the snapshots and the corresponding inputs as
\begin{equation*}
\X_N = [\x_N^{(0)} \ \dots \x_N^{(K)}] \in \mathbb{R}^{N \times K}, \qquad \mathbf{\U}= [\u_0,\ \dots, \ \u_K] \in \mathbb{R}^{m \times K},
\end{equation*}
where $\x_N^{(i)}  = \x_N(t_i)$ with $0 = t_0 < t_1< \dots < t_K = t_f$. We next compute a low-dimensional POD subspace in which to optimally represent the snapshot data, i.e., we compute the singular value decomposition of the snapshot matrix as
$$
\X_N = \V_N \bSigma_N \W_N^{\top},
$$
where $\V_N\in \mathbb{R}^{N \times K}$, $\bSigma \in \mathbb{R}^{K \times K}$ and $\W_N \in \mathbb{R}^{K \times K}$. We then obtain the $n\ll N$-dimensional POD basis as $\V_n = \V_N(:,1:n)$. 
The high-dimensional snapshot data are projected onto the POD subspace spanned by the columns of $\V_n$, which yields the projected data
\begin{equation*}
\X = \V^{\top}_n \X_N = [\x^{(0)} \quad \dots \quad \x^{(K)}] \in \mathbb{R}^{n \times K},
\qquad
\dot{\X} = [ \dot{\x}^{(0)} \quad \dot{\x}^{(1)} \quad \dots \quad  \dot{\x}^{(K)}]\in \mathbb{R}^{n \times K}.
\end{equation*}
The columns of the time-derivative data matrix $\dot{\X}$ are computed with a fourth-order implicit Runge-Kutta backward differencing method.

In order to learn the ROM, the operator inference framework solves a least squares problem to find the reduced operators that yield the ROM that best matches the projected snapshot data in a minimum residual sense. To learn the Burgers' ROM, we solve
\begin{equation*}
\min_{ \A \in \mathbb{R}^{n \times n} , \H \in \mathbb{R}^{n \times n^2}, \B  \in \mathbb{R}^{n \times m}}
\left \Vert   \X_N^{\top}\A^{\top} + (\X_N \otimes \X_N)^{\top} \H^{\top}  + \U^{\top}\B^{\top} - \dot{\X}_N^{\top} \right \Vert^2_2.
\end{equation*}
This allows us to compute the ROM operators  $\A$, $\H$, and $\B$ without needing explicit access to the original high-dimensional operators $\A_N$, $\H_N$, $\B_N$, which constitutes a fully non-intrusive method.
Finally, the learned ROM takes the form
\begin{equation} \label{eq:quadratic_system_reduced}
\dot{\x} = \A \x+ \H (\x \otimes \x) + \B \u.
\end{equation}
Figure~\ref{fig:Burgers_OPINF}, left, shows the relative state reconstruction error $\Vert \X_N - \V_N \X\Vert / \Vert \X_N \Vert$ for the operator inference ROM. For $n=1,2,3$ we did not obtain stable models. The state reconstruction error decays monotonically, and operator inference produces accurate ROM simulations.

\subsubsection{Stability domain computation}
For the Lyapunov matrix, we choose $\Q = \I_n$ and compute the corresponding solution $\P$ of the Lyapunov equation~\eqref{eq:LyapMatrix}. We again compute the stability radii $\rho$ obtained from the analytical estimate in Proposition~\ref{prop:analyticalEstimate}, and the optimal estimate $\rho^*$ from Theorem~\eqref{thm:alpha}.\footnote{Since this is a non-intrusive setting, we cannot compute the analytic stability radius for the full-order model, as we only have model data, but not the FOM operators.}
Figure~\ref{fig:Burgers_OPINF}, right, shows the numerical result. First, we note that for $n>12$, the ROM simulations continued to increase in accuracy, but the $\A$ matrix had few eigenvalues that were positive (in the order $\mathcal{O}(10^{-8})$), so the approach to compute a quadratic Lyapunov function via solution of the Lyapunov matrix \ref{eq:LyapMatrix} is not applicable. Other approaches that work in this setting are needed, which we will discuss as part of future work. 
Similar to the previous examples, we observe from Figure~\ref{fig:Burgers_OPINF}, right, that the analytical estimate $\rho$  is two to four orders of magnitude smaller than the estimate $\rho^*$ computed via optimization. Moreover, we observe that while $\rho, \rho^*$ initially decrease, after $n=8$ they stagnate and then increase, indicating improved stability properties of the larger-dimensional ROMs. 

\begin{figure}[ht!]
	\begin{subfigure}{}
		\includegraphics[width=0.48\textwidth]{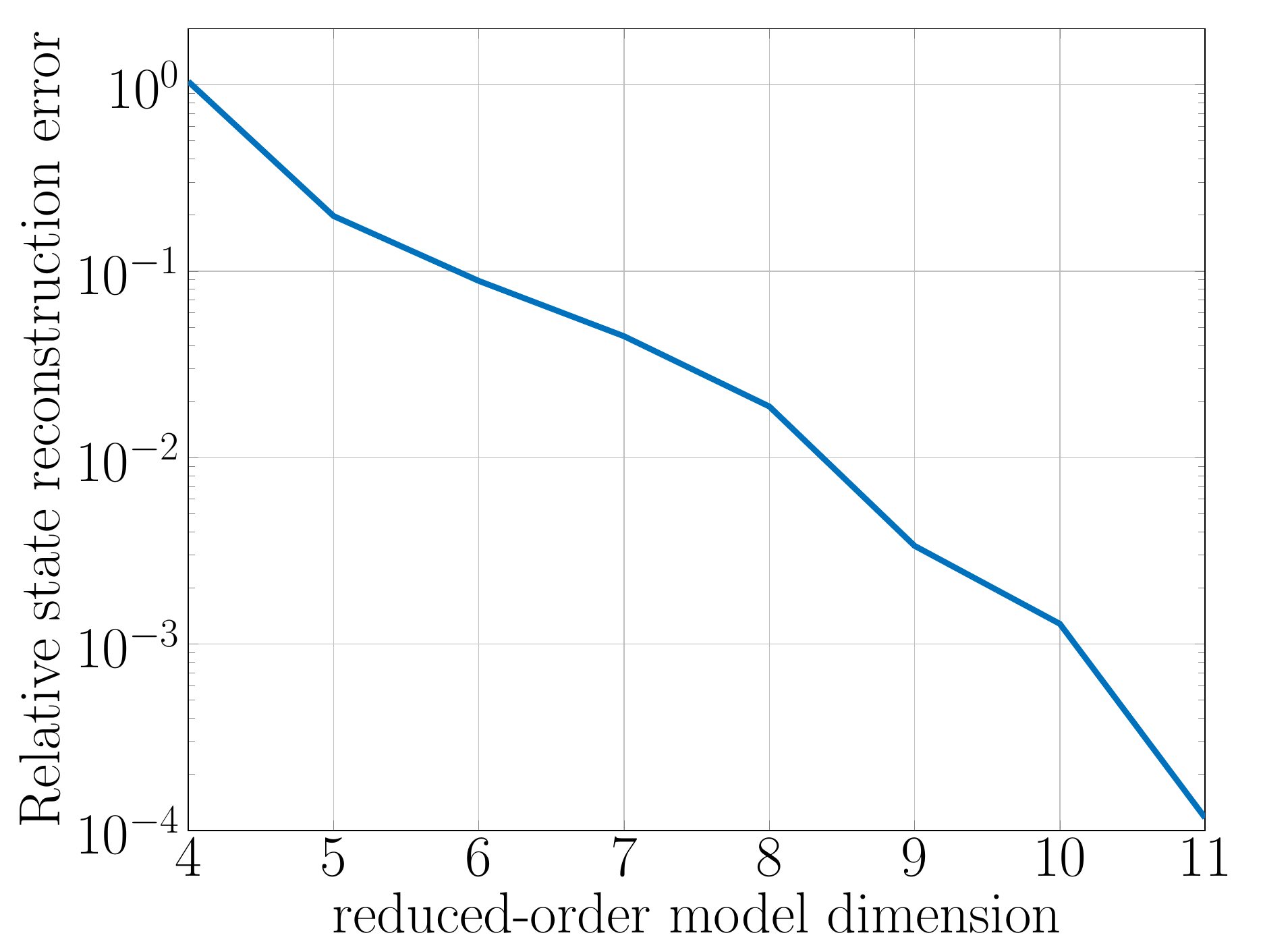}
	\end{subfigure}	
	\begin{subfigure}{}
		\includegraphics[width=0.48\textwidth]{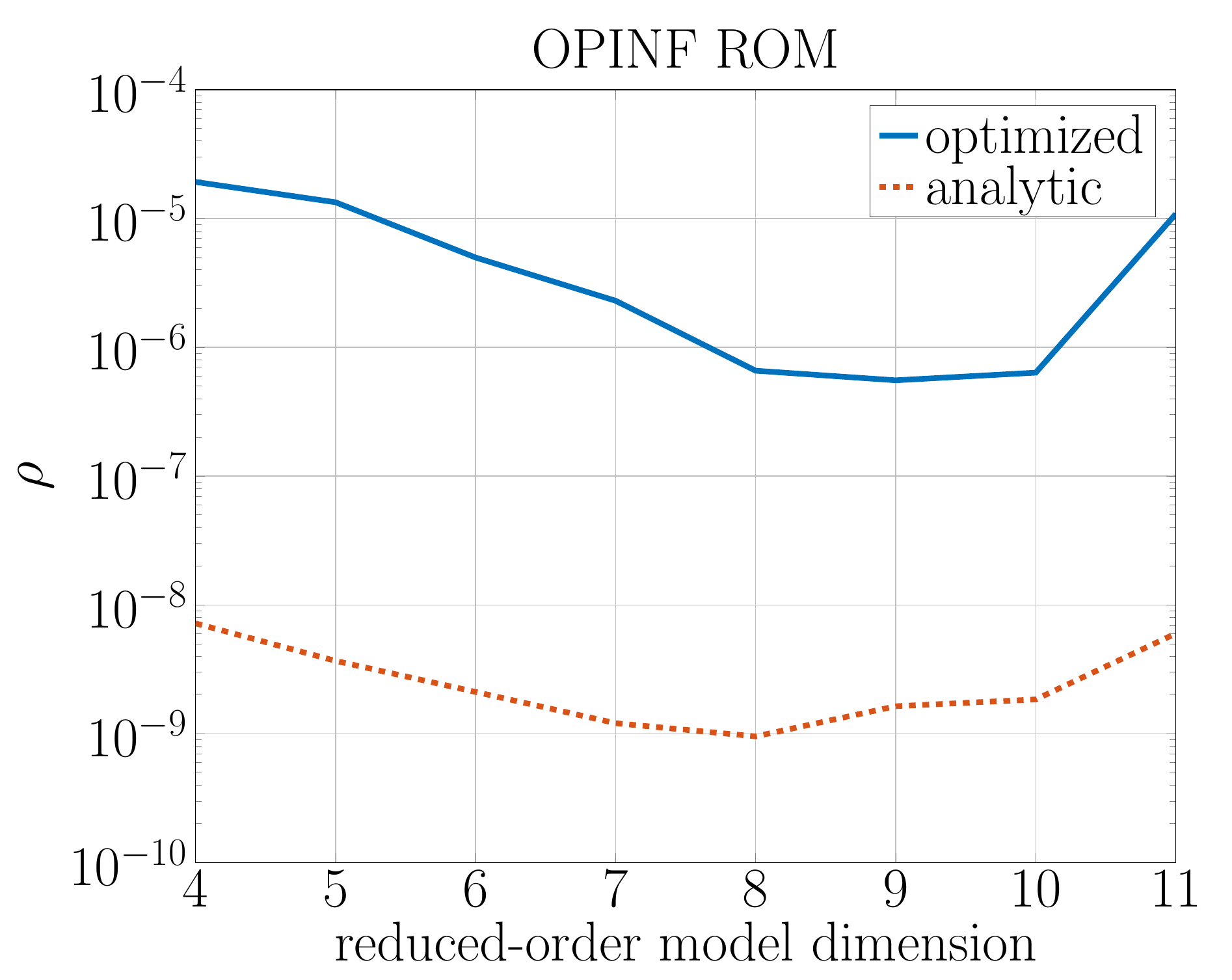}
	\end{subfigure}	
	\caption{Learned ROM for Burgers' equation: (Left) Relative state reconstruction error $\Vert \X_N - \V_N \X\Vert / \Vert \X_N \Vert$. (Right) The value of $\rho$ for the stability domain estimate for the analytic method from Proposition~\ref{prop:analyticalEstimate} and the optimal estimate $\rho^*$ from Theorem~\ref{thm:alpha}.}
	\label{fig:Burgers_OPINF}
\end{figure}

\section{Conclusions and future directions}	\label{sec:conclusions}
For nonlinear reduced-order models (ROMs), computing the stability domain of equilibrium points is important for both open and closed-loop applications. We presented a framework to compute the optimal stability domain estimates for quadratic-bilinear ROMs for a given quadratic Lyapunov function. Quadratic-bilinear ROMs represent a large class of nonlinear systems, as many of those systems can be recast into QB form via variable transformations and the addition of extra variables. 
Our numerical findings on three different ROM test problems show that the classical analytical estimates of the stability domain are overly conservative---up to four orders of magnitude---compared to the stability radii computed via the suggested convex optimization problem.  The numerical results also demonstrate various ways to pick quadratic Lyapunov functions, which can be informed by the model reduction process itself. 
This work motivates several directions of future research. First, as seen in Section~\ref{sec:BurgersOPINF-numerics}, this approach has its limitations when the linear system matrix is unstable, since the Lyapunov equation does not have a solution. Alternative approaches in this case are needed that can take into account the quadratic influence on the system. 
Moreover, Section~\ref{sec:BurgersOPINF-numerics} suggests an interesting direction of future research. The operator inference problem is unconstrained, however, the stability domain computation could be integrated as an additional constraint for the learning problem, yielding learned ROMs with favorable stability properties. 
Lastly, in the projection-based setting, theoretical results relating the ROM stability radius to the high-fidelity stability radius would be desirable. For large-scale systems, a proper choice of norm would like need to be made to avoid degeneration of the $l_2$ norm.

\bibliographystyle{abbrv}
\bibliography{stability}

\end{document}